\theoremstyle{definition}
\newtheorem*{remark}{Remark}
\newtheorem{theorem}{Theorem}[section]
\begin{document}

\title{Chudnovsky-Ramanujan Type Formulae for the Legendre Family}

\author{Imin Chen and Gleb Glebov}

\date{September 2017}

\subjclass[2010]{Primary: 11Y60; Secondary: 14H52, 14K20, 33C05}

\keywords{elliptic curves; elliptic functions; elliptic integrals; Dedekind eta function; Eisenstein series; hypergeometric function; j-invariant; Picard-Fuchs differential equation}

\address{Imin Chen \\
Department of Mathematics \\
Simon Fraser University \\
Burnaby \\
British Columbia \\
CANADA}

\email{ichen@sfu.ca}

\address{Gleb Glebov \\
Department of Mathematics \\
Simon Fraser University \\
Burnaby \\
British Columbia \\
CANADA}

\email{gglebov@sfu.ca}

\thanks{This work was supported by an NSERC Discovery Grant and SFU VPR Bridging Grant.}

\maketitle

\begin{abstract}
We apply the method established in our previous work to derive a Chudnovsky-Ramanujan type formula for the Legendre family of elliptic curves. As a result, we prove two identities for $1/\pi$ in terms of hypergeometric functions.
\end{abstract}

\tableofcontents

\section{Introduction}

In \cite{Ramanujan}, Ramanujan derived a number of rapidly converging series for $1/\pi$. Later, Chudnovsky and Chudnovsky \cite{Chudnovsky}, \cite{Chudnovsky2} derived an addtional such series based on the modular $j$-function, which is often used in practice for the computation of the digits of $\pi$. In \cite{Chen}, we generalized the method of \cite{Chudnovsky}, \cite{Chudnovsky2} to derive a complete list of Chudnovsky-Ramanujan type formulae for the modular $j$-function. The method is systematic, and in principle applicable to any family of elliptic curves parametrized by a triangular Fuchsian group of genus zero.

The purpose of this paper is to illustrate the method of \cite{Chen} in another case. Consider the Legendre family of elliptic curves given by
$$C_\lambda : y^2 = x(x - 1)(x - \lambda),$$
which is parametrized by the modular $\lambda$ function. The Picard-Fuchs differential equation for this family is well known and given by
\begin{equation}
\label{eq:Picard}
\lambda(1 - \lambda) \frac{d^2 P}{d\lambda^2} + (1 - 2\lambda) \frac{dP}{d\lambda} - \frac{P}{4} = 0,
\end{equation}
which is a hypergeometric differential equation with parameters $a = 1/2$, $b = 1/2$, $c = 1$, and it has three regular singular points: $0, 1, \infty$.

Let ${}_{2}F_{1}(a, b; c; z)$ denote the hypergeometric function. Kummer's method yields six distinct hypergeometric solutions of the form
$$\lambda^\alpha (1 - \lambda)^\beta {}_{2}F_{1}(a, b; c; \nu(\lambda)),$$
where $\nu(\lambda)$ is one of
$$\lambda, \quad 1 - \lambda, \quad \frac{1}{\lambda}, \quad \frac{1}{1 - \lambda}, \quad \frac{\lambda}{\lambda - 1}, \quad \frac{\lambda - 1}{\lambda}.$$
In particular, we have the solutions:

Around $\lambda = 0$:
\begin{align*}
{}_{2}F_{1}\bigg(\frac{1}{2}, \frac{1}{2}; 1; \lambda\bigg),& \\
(1 - \lambda)^{-1/2} {}_{2}F_{1}\bigg(\frac{1}{2}, \frac{1}{2}; 1; \frac{\lambda}{\lambda - 1}\bigg).&
\end{align*}

Around $\lambda = 1$:
\begin{align*}
{}_{2}F_{1}\bigg(\frac{1}{2}, \frac{1}{2}; 1; 1 - \lambda\bigg),& \\
\lambda^{-1/2} {}_{2}F_{1}\bigg(\frac{1}{2}, \frac{1}{2}; 1; \frac{\lambda - 1}{\lambda}\bigg).&
\end{align*}

Around $\lambda = \infty$:
\begin{align*}
\lambda^{-1/2} {}_{2}F_{1}\bigg(\frac{1}{2}, \frac{1}{2}; 1; \frac{1}{\lambda}\bigg),& \\
(1 - \lambda)^{-1/2} {}_{2}F_{1}\bigg(\frac{1}{2}, \frac{1}{2}; 1; \frac{1}{1 - \lambda}\bigg).&
\end{align*}

We can restrict out attention to only the first solution of each pair above because the second solution of each pair is obtained from the first by a Pfaff transformation.

Each of these solutions will be valid near one of the singular points $\lambda = 0, 1, \infty$ and will give rise to an expression of the period $P_1(\lambda)$ in terms of a hypergeometric function and $\lambda$.

Using the method of \cite{Chen}, we turn one of these period expressions into a Chudnovsky-Ramanujan type formulae valid near $\lambda = 0$. As a result, we derive the following identities for $1/\pi$ in terms of hypergeometric functions:
\begin{align}
\frac{8}{\pi} &= {}_{2}F_{1}\bigg(\frac{1}{2}, \frac{1}{2}; 1; \frac{1}{2}\bigg) {}_{2}F_{1}\bigg(\frac{3}{2}, \frac{3}{2}; 2; \frac{1}{2}\bigg), \label{eq:identity1} \\
\frac{1}{\pi} &= {}_{2}F_{1}\bigg(\frac{1}{2}, \frac{1}{2}; 1; -1\bigg)^2 - {}_{2}F_{1}\bigg(\frac{1}{2}, \frac{1}{2}; 1; -1\bigg) {}_{2}F_{1}\bigg(\frac{3}{2}, \frac{3}{2}; 2; -1\bigg). \label{eq:identity2}
\end{align}

\begin{remark}
Although the above identities could be verified by other means, the purpose of this work is to illustrate how systematic the method in \cite{Chen} is to discover and derive such formulae.
\end{remark}

\section{Periods and families of elliptic curves}

Let $C_\lambda : y^2 = x(x - 1)(x - \lambda)$ be the Legendre family. We first explain how to specify periods and quasi-periods for $C_\lambda$. Suppose
\begin{enumerate}[(1)]
\item $\alpha_1(\lambda)$ is a simple loop which encircles 0 and 1, and does not pass through $\lambda$;
\item $\alpha_2(\lambda)$ is a simple loop which encircles 1 and $\lambda$, and does not pass through 0.
\end{enumerate}
Then $\{\alpha_1(\lambda), \alpha_2(\lambda)\}$ forms a $\mathbb{Z}$-basis for $H_1(E_\lambda, \mathbb{Z})$ as depicted in \cite[Chapter VI, Section 1, Figure 6.5]{Silverman}, and it is possible to choose $\alpha_i(\lambda)$, $i = 1, 2$, so they vary continuously for $\lambda$ in a simply connected open subset of $\mathbb{P}^1(\mathbb{C}) \setminus \{0 ,1, \infty\}$. See \cite{Chen} for details.

We then define the periods $P_i$ and quasi-periods $Q_i$ for $C_\lambda$ as:
\begin{align*}
P_i &= \int_{\alpha_i} \frac{dx}{y}, \\
Q_i &= -\int_{\alpha_i} x\frac{dx}{y}.
\end{align*}
Up to a modification of the $\mathbb{Z}$-basis $\{\alpha_1(\lambda), \alpha_2(\lambda)\}$ for $H_1(C_\lambda, \mathbb{Z})$, that is, a transformation in $\text{SL}_2(\mathbb{Z})$, we obtain another $\mathbb{Z}$-basis $\{\gamma_1(\lambda), \gamma_2(\lambda)\}$ such that $\tau = P_2/P_1$ is in the fundamental domain for $\Gamma(2)$, and is near the points $0, 1, \infty$ in the sense that $\lambda(\tau)$, $1 - \lambda(\tau)$, $1/(1 - \lambda(\tau))$ have absolute value less than unity, respectively.

Under the change of variables
\begin{align*}
x &\mapsto x + \frac{\lambda + 1}{3}, \\
y &\mapsto \frac{y}{2},
\end{align*}
the Legendre family is put into the Weierstrass form:
\begin{align*}
E_\lambda : y^2 &= 4x^3 - \frac{4}{3} (\lambda^2 - \lambda + 1)x - \frac{4}{27} (\lambda + 1)(2\lambda - 1)(\lambda - 2) \\
&= 4(x - e_0)(x - e_1)(x - e_\lambda),
\end{align*}
where
$$e_0 = -\frac{1 + \lambda}{3}, \quad e_1 = \frac{2 - \lambda}{3}, \quad e_\lambda = \frac{2\lambda - 1}{3}.$$
This defines an isomorphism $\pi : E_\lambda \to C_\lambda$.

The normalized $j$-invariant $J = j/12^3$ of $C_\lambda$ is given by
$$J = \frac{4}{27} \frac{(\lambda^2 - \lambda + 1)^3}{\lambda^2 (1 - \lambda)^2}.$$
The map $\lambda \mapsto J(\lambda)$ gives the covering map $X(2) \to X(1)$, where $X(n)$ denotes the modular curve with full level $n$-structure.

The choice of $P_i$ and $Q_i$ for $C_\lambda$ above fixes the periods and quasi-periods of $E_\lambda$, which we denote by $\Omega_i$ and ${\rm H}_i$, respectively. In fact, we have that
\begin{align*}
P_i &= 2\Omega_i, \\
Q_i &= 2{\rm H}_i - \frac{1 + \lambda}{3} P_i.
\end{align*}

Fix an elliptic curve $E : y^2 = 4x^3 - g_2 x - g_3$. For $u \ne 0$, the map $\varphi_u : (x, y) \mapsto (u^2 x, u^3 y)$ gives an isomorphism $\varphi_u : E \xrightarrow{\cong} E'$ from an elliptic $E$ over $\mathbb{C}$ to the elliptic curve $E'$ over $\mathbb{C}$, where
$$E : y^2 = 4x^3 - g_2 x - g_3 \quad \text{and} \quad E' : y^2 = 4x^3 - g_2' x - g_3',$$
and
\begin{empheq}[left=\empheqlbrace]{align}
\label{eq:scale}
\begin{split}
g_2' &= u^4 g_2, \\
g_3' &= u^6 g_3, \\
\Delta(E') &= u^{12} \Delta(E).
\end{split}
\end{empheq}

By the uniformization theorem, $E(\mathbb{C}) = E_{\Lambda}(\mathbb{C})$ for some lattice $\Lambda \subset \mathbb{C}$. It then follows that $E'(\mathbb{C}) = E_{u^{-1} \Lambda}(\mathbb{C})$, and we have the following commutative diagram:
\begin{displaymath}
\xymatrix{
\mathbb{C} / \Lambda
\ar[d]_{z \mapsto u^{-1} z}
\ar[r]_{\iota_\Lambda}
& E_\Lambda(\mathbb{C}) = E(\mathbb{C})
\ar[d]^{\varphi_{u}} \\
\mathbb{C} / {u^{-1} \Lambda}
\ar[r]_{\iota_{u^{-1} \Lambda}}
& E_{u^{-1} \Lambda}(\mathbb{C}) = E'(\mathbb{C})
}
\end{displaymath}
so that the isomorphism $\varphi_u$ corresponds to scaling $\Lambda$ by $u^{-1}$.

We now introduce three families of elliptic curves: $E$, $E_\tau$, $E_\lambda$, and compare their discriminants, associated lattices, and periods.

Consider the elliptic curve $E$ over $\mathbb{C}$ given by
$$E : y^2 = 4x^3 - g_2 x - g_3, \quad \Delta(E) = \Delta = g_2^3 - 27g_3^2, \quad \Lambda(E) = \mathbb{Z} \omega_1 + \mathbb{Z} \omega_2.$$

Taking $u = \omega_1$, and using \eqref{eq:scale} we see that $E$ is isomorphic to
$$E_\tau : y^2 = 4x^3 - g_2(\tau) x - g_3(\tau), \quad \Delta(E_\tau) = \Delta(\tau) = g_2(\tau)^3 - 27g_3(\tau)^2, \quad \Lambda_\tau = \mathbb{Z} + \mathbb{Z} \tau,$$
where $\tau = \omega_2/\omega_1$.

To find a $u$ so that $E$ is isomorphic to $E_\lambda$, we claim that it is necessary to set
\begin{empheq}[left=\empheqlbrace]{align}
\nonumber
\begin{split}
u^4 g_2 &= \frac{4}{3} (\lambda^2 - \lambda + 1), \\
u^6 g_3 &= \frac{4}{27} (\lambda + 1)(2\lambda - 1)(\lambda - 2).
\end{split}
\end{empheq}
Dividing the second equation by the first, we obtain
$$u^2 \frac{g_3}{g_2} = \frac{(\lambda + 1)(2\lambda - 1)(\lambda - 2)}{9(\lambda^2 - \lambda + 1)},$$
or
$$u = \sqrt{\frac{(\lambda + 1)(2\lambda - 1)(\lambda - 2)}{9(\lambda^2 - \lambda + 1)} \frac{g_2}{g_3}}.$$
The above is a necessary condition for $u$. To show sufficiency, observe that
\begin{align*}
u^4 g_2 &= g_2 \frac{g_2^2}{g_3^2} \bigg\{\frac{(\lambda + 1)(2\lambda - 1)(\lambda - 2)}{9(\lambda^2 - \lambda + 1)}\bigg\}^2 \\
&= \frac{9g_2}{4} \frac{12}{J^{1/3} \Delta^{1/3}} \frac{J - 1}{J} \bigg\{\frac{(\lambda + 1)(2\lambda - 1)(\lambda - 2)}{9(\lambda^2 - \lambda + 1)}\bigg\}^2 \\
&= \frac{27J}{J - 1} \bigg\{\frac{(\lambda + 1)(2\lambda - 1)(\lambda - 2)}{9(\lambda^2 - \lambda + 1)}\bigg\}^2 \\
&= \frac{4}{3} (\lambda^2 - \lambda + 1),
\end{align*}
and
\begin{align*}
u^6 g_3 &= g_3 \frac{g_2^3}{g_3^3} \bigg\{\frac{(\lambda + 1)(2\lambda - 1)(\lambda - 2)}{9(\lambda^2 - \lambda + 1)}\bigg\}^3 \\
&= \frac{27g_3}{8} \frac{12^{3/2}}{\sqrt{J} \sqrt{\Delta}} \frac{J^{3/2}}{(J - 1)^{3/2}} \bigg\{\frac{(\lambda + 1)(2\lambda - 1)(\lambda - 2)}{9(\lambda^2 - \lambda + 1)}\bigg\}^3 \\
&= \frac{27}{8} \frac{J^{3/2}}{(J - 1)^{3/2}} \sqrt{\frac{J - 1}{27}} \bigg\{\frac{(\lambda + 1)(2\lambda - 1)(\lambda - 2)}{9(\lambda^2 - \lambda + 1)}\bigg\}^3 \\
&= \frac{12^{3/2} \sqrt{27}}{8} \frac{J}{J - 1} \bigg\{\frac{(\lambda + 1)(2\lambda - 1)(\lambda - 2)}{9(\lambda^2 - \lambda + 1)}\bigg\}^3 \\
&= \frac{12^{3/2} \sqrt{27}}{8} \frac{4}{729} (\lambda + 1)(2\lambda - 1)(\lambda - 2) \\
&= \frac{4}{27} (\lambda + 1)(2\lambda - 1)(\lambda - 2).
\end{align*}

Thus, taking
$$u = \sqrt{\frac{(\lambda + 1)(2\lambda - 1)(\lambda - 2)}{9(\lambda^2 - \lambda + 1)} \frac{g_2}{g_3}},$$
and using \eqref{eq:scale}, we see that $E$ is isomorphic to
\begin{align*}
E_\lambda : y^2 &= 4x^3 - \frac{4}{3} (\lambda^2 - \lambda + 1)x - \frac{4}{27} (\lambda + 1)(2\lambda - 1)(\lambda - 2), \\
\Delta(E_\lambda) &= 16\lambda^2 (1 - \lambda)^2, \quad \Lambda(E_\lambda) = \mathbb{Z} \Omega_1 + \mathbb{Z} \Omega_2.
\end{align*}

Taking
$$u = \sqrt{\frac{(\lambda + 1)(2\lambda - 1)(\lambda - 2)}{9(\lambda^2 - \lambda + 1)} \frac{g_2(\tau)}{g_3(\tau)}},$$
we see that $E_\tau$ is isomorphic to $E_\lambda$. Hence, we have that $\Lambda(E_\lambda) = \mu(\tau) \Lambda_\tau$, where
\begin{empheq}[left=\empheqlbrace]{align}
\label{eq:homothety}
\begin{split}
\mu(\tau) &= \sqrt{\frac{9(\lambda^2 - \lambda + 1)}{(\lambda + 1)(2\lambda - 1)(\lambda - 2)} \frac{g_3(\tau)}{g_2(\tau)}} \\
&= \frac{\Delta(\tau)^{1/12}}{J^{1/6}} \frac{(J - 1)^{1/4}}{27^{1/4}} \sqrt{\frac{9(\lambda^2 - \lambda + 1)}{(\lambda + 1)(2\lambda - 1)(\lambda - 2)}} \\
&= \frac{2^{1/3}}{27} \frac{\Delta(\tau)^{1/12}}{(\lambda (1 - \lambda))^{1/6}}.
\end{split}
\end{empheq}

\section{Limit values of the modular $\lambda$ function}

It is well known that for $\tau$ in the upper half-plane we have
$$\lambda(\tau) = 16q^{1/2} - 128q + 704q^{3/2} - O(q^2), \quad q = e^{2\pi i \tau},$$
or with $x = \sqrt{q}$,
$$\lambda(\tau) = 16x - 128x^2 + 704x^3 - O(x^4), \quad x = \sqrt{q}.$$
In what follows and throughout the paper, the notation $\tau \to \infty$ will be synonymous with ${\rm Im}(\tau) \to \infty$. Therefore, we have
$$\lim_{\tau \to \infty} \lambda(\tau) = \lim_{x \to 0} (16x - 128x^2 + 704x^3 - O(x^4)) = 0.$$
Moreover, in view of $\lambda(-1/\tau) = 1 - \lambda(\tau)$, we see that
$$\lim_{\tau \to 0} \lambda(\tau) = \lim_{\tau \to \infty} \lambda(-1/\tau) = \lim_{\tau \to \infty} (1 - \lambda(\tau)) = 1.$$
Furthermore, using $\lambda(\tau + 1) = \lambda(\tau)/(\lambda(\tau) - 1)$ together with $\lambda(-1/\tau) = 1 - \lambda(\tau)$, we find that
$$\lim_{\tau \to 1} \lambda(\tau) = \lim_{\tau \to \infty} \lambda(1 - 1/\tau) = \lim_{\tau \to \infty} \frac{\lambda(-1/\tau)}{\lambda(-1/\tau) - 1} = \lim_{\tau \to \infty} \frac{\lambda(\tau) - 1}{\lambda(\tau)} = \infty.$$
Below is the table summarizing these limit values.
\begin{table}[H]
\centering
\begin{tabular}{cc}
\hline
$\tau$ & $\lambda(\tau)$ \\
\hline
$\infty$ & $0$ \\
$0$ & $1$ \\
$1$ & $\infty$ \\
\hline
\end{tabular}
\caption{Limit values of $\lambda(\tau)$}
\end{table}

\section{Hypergeometric representations of $\Omega_1$}

Recall that the first solution of \eqref{eq:Picard} around $\lambda = 0$ is
$${}_{2}F_{1}\bigg(\frac{1}{2}, \frac{1}{2}; 1; \lambda\bigg).$$
Following the method in \cite{Archinard} with the homothety factor $\mu(\tau)$ in \eqref{eq:homothety}, we have that
$${}_{2}F_{1}\bigg(\frac{1}{2}, \frac{1}{2}; 1; \lambda\bigg) = \frac{(A + B\tau) \eta(\tau)^2}{(\lambda (1 - \lambda))^{1/6}}.$$
Replacing $\tau$ by $\tau + 2$ leaves $\lambda(\tau)$ invariant but changes $\eta(\tau)^2$ by $\zeta \eta(\tau)^2$, where $\zeta = e^{\pi i/3}$. Hence, we get the identity
$$(A + B\tau) \frac{\eta(\tau)^2}{(\lambda (1 - \lambda))^{1/6}} = (A + B(\tau + 2)) \frac{\zeta \eta(\tau)^2}{(\lambda (1 - \lambda))^{1/6}}$$
valid for $\tau$ around $\infty$. Since both sides are nonzero for $\tau$ around $\infty$, we obtain
$$A + B\tau = (A + B(\tau + 2)) \zeta$$
for $\tau$ around $\infty$. Upon solving for $B/A$ and taking the limit as $\tau \to \infty$, shows that $B = 0$. Recalling that
$$\lambda(\tau) = 16q^{1/2} - 128q + 704q^{3/2} - O(q^2), \quad q = e^{2\pi i \tau},$$
we find that $A = 16^{1/6} = 4^{1/3}$, and thus we have the identity
$${}_{2}F_{1}\bigg(\frac{1}{2}, \frac{1}{2}; 1; \lambda\bigg) = 4^{1/3} \frac{\eta(\tau)^2}{(\lambda (1 - \lambda))^{1/6}}.$$
We thus obtain the following period expression.

\begin{theorem}
\label{thm:period}
For $\tau$ around $\infty$, we have
$$\omega_1 = 2^{1/3} \pi (\lambda (1 - \lambda))^{1/6} \Delta(E)^{-1/12} {}_{2}F_{1}\bigg(\frac{1}{2}, \frac{1}{2}; 1; \lambda\bigg).$$
\end{theorem}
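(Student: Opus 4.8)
The plan is to convert the already-established identity
$${}_{2}F_{1}\bigg(\frac{1}{2}, \frac{1}{2}; 1; \lambda\bigg) = 4^{1/3} \frac{\eta(\tau)^2}{(\lambda (1 - \lambda))^{1/6}}$$
into a statement about the period $\omega_1$ by eliminating $\eta(\tau)^2$ in favour of $\omega_1$ and $\Delta(E)$. The bridge is the discriminant: I will express $\Delta(E)$ in terms of $\Delta(\tau)$ using the homothety between the lattices, and then invoke the classical product formula $\Delta(\tau) = (2\pi)^{12}\eta(\tau)^{24}$ relating the discriminant of $\Lambda_\tau = \mathbb{Z} + \mathbb{Z}\tau$ to the Dedekind eta function.

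Concretely, I would proceed as follows. Since $E$ is isomorphic to $E_\tau$ via $\varphi_u$ with $u = \omega_1$, the corresponding lattices satisfy $\Lambda_\tau = \omega_1^{-1}\Lambda(E)$; hence by the discriminant scaling in \eqref{eq:scale} we obtain $\Delta(\tau) = \omega_1^{12}\Delta(E)$. Combining this with $\Delta(\tau) = (2\pi)^{12}\eta(\tau)^{24}$ and extracting twelfth roots gives $\eta(\tau)^2 = \omega_1 \Delta(E)^{1/12}/(2\pi)$. Substituting this expression for $\eta(\tau)^2$ into the displayed identity above and solving for $\omega_1$ yields
$$\omega_1 = 2\pi\, 4^{-1/3}(\lambda(1-\lambda))^{1/6}\Delta(E)^{-1/12}\,{}_{2}F_{1}\bigg(\frac{1}{2}, \frac{1}{2}; 1; \lambda\bigg),$$
and the constant simplifies via $2\cdot 4^{-1/3} = 2^{1/3}$ to give exactly the claimed formula.

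The routine content — the homothety bookkeeping and the algebra — is straightforward once \eqref{eq:scale} and the eta product formula are in hand. The main subtlety I expect is the consistent choice of branches for the fractional powers $\Delta(E)^{\pm 1/12}$, $\eta(\tau)^2$, and $(\lambda(1-\lambda))^{1/6}$: a priori the twelfth-root extraction $\eta(\tau)^2 = \omega_1 \Delta(E)^{1/12}/(2\pi)$ is determined only up to a twelfth root of unity. The qualifier ``$\tau$ around $\infty$'' is precisely what rigidifies this, since all quantities are holomorphic and nonvanishing there; matching the leading behaviour as $\tau\to\infty$ (equivalently $q\to 0$), exactly as was already used to fix $A = 4^{1/3}$ in the preceding computation, forces the principal branches to align and pins down the correct root of unity to be $1$. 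A secondary point to verify is that the normalization $\Delta(\tau) = (2\pi)^{12}\eta(\tau)^{24}$ is the one compatible with the Weierstrass model $y^2 = 4x^3 - g_2 x - g_3$ used throughout, so that $\Delta = g_2^3 - 27g_3^2$ matches the eta product without an extra scalar.
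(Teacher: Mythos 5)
Your proposal is correct and follows essentially the same route as the paper: the paper's one-line proof likewise combines the identity ${}_{2}F_{1}(1/2,1/2;1;\lambda) = 4^{1/3}\eta(\tau)^2/(\lambda(1-\lambda))^{1/6}$ with $\Delta(\tau) = (2\pi)^{12}\eta(\tau)^{24}$ and $\Delta(\tau) = \omega_1^{12}\Delta$, exactly as you do. Your additional remarks on fixing the branch via the behaviour as $\tau \to \infty$ are a reasonable elaboration of what the paper leaves implicit.
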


\begin{proof}
Since $\Delta(\tau) = (2\pi)^{12} \eta(\tau)^{24}$ and $\Delta(\tau) = \omega_1^{12} \Delta$, the result following from the identity above.
\end{proof}

We state the above period expression for $E$, but if $E = E_\lambda$, then the period expression becomes the classical relation:
\begin{equation}
\label{eq:classical}
P_1 = 2\pi {}_{2}F_{1}\bigg(\frac{1}{2}, \frac{1}{2}; 1; \lambda\bigg),
\end{equation}
where $P_1 = 2\Omega_1$, because $\Delta(E_\lambda) = 16\lambda^2 (1 - \lambda)^2$.

Now, we use the fact that the Galois covering $X(2) \to X(1)$ has Galois group $\text{GL}_2(\mathbb{F}_2)$, which is isomorphic to $S_3$. Explicitly, we have generators:
\begin{empheq}[left=\empheqlbrace]{align}
\label{eq:Galois}
\begin{split}
\lambda(\tau \pm 1) &= \frac{\lambda(\tau)}{\lambda(\tau) - 1}, \\
\lambda(-1/\tau) &= 1 - \lambda(\tau).
\end{split}
\end{empheq}
Applying these transformations to the above period expression around $\lambda = 0$ gives all period expressions around $\lambda = 0, 1, \infty$. We also require the following functional equations of the Dedekind eta function:
\begin{empheq}[left=\empheqlbrace]{align}
\label{eq:Dedekind}
\begin{split}
\eta(\tau \pm 1) &= e^{\pm \pi i/12} \eta(\tau), \\
\eta(-1/\tau) &= \eta(\tau) \sqrt{-i\tau}.
\end{split}
\end{empheq}

\begin{theorem}
\label{thm:transform}
For $\tau$ around 0, we have
$$\omega_1 = 2^{1/3} \frac{\pi i}{\tau} (\lambda (1 - \lambda))^{1/6} \Delta(E)^{-1/12} {}_{2}F_{1}\bigg(\frac{1}{2}, \frac{1}{2}; 1; 1 - \lambda\bigg).$$
\end{theorem}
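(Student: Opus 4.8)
The plan is to obtain the statement for $\tau$ around $0$ by applying the modular substitution $\tau \mapsto -1/\tau$ to the identity established for $\tau$ around $\infty$ just before Theorem~\ref{thm:period}, namely
$${}_{2}F_{1}\bigg(\frac{1}{2}, \frac{1}{2}; 1; \lambda\bigg) = 4^{1/3} \frac{\eta(\tau)^2}{(\lambda (1 - \lambda))^{1/6}}.$$
Since $-1/\tau$ lies near $\infty$ precisely when $\tau$ lies near $0$, replacing $\tau$ by $-1/\tau$ throughout produces an identity valid in the desired region. The right-hand side then involves only $\eta(-1/\tau)^2$ and $\lambda(-1/\tau)$, both of which are controlled by the functional equations \eqref{eq:Galois} and \eqref{eq:Dedekind}.

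First I would substitute $\lambda(-1/\tau) = 1 - \lambda$, which also gives $1 - \lambda(-1/\tau) = \lambda$; this leaves the symmetric factor $(\lambda(1-\lambda))^{1/6}$ unchanged and turns the hypergeometric argument into $1 - \lambda$. Next I would apply $\eta(-1/\tau)^2 = -i\tau\,\eta(\tau)^2$ from \eqref{eq:Dedekind}, so the transformed identity reads
$${}_{2}F_{1}\bigg(\frac{1}{2}, \frac{1}{2}; 1; 1 - \lambda\bigg) = 4^{1/3} \frac{-i\tau\,\eta(\tau)^2}{(\lambda (1 - \lambda))^{1/6}}.$$
To convert this into a period expression I would recall from the proof of Theorem~\ref{thm:period} that $\Delta(\tau) = (2\pi)^{12}\eta(\tau)^{24}$ together with $\Delta(\tau) = \omega_1^{12}\Delta(E)$ yields $\omega_1 = 2\pi\,\eta(\tau)^2\,\Delta(E)^{-1/12}$, hence $\eta(\tau)^2 = (\omega_1/2\pi)\,\Delta(E)^{1/12}$. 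Substituting and solving for $\omega_1$, the constant simplifies via $2\pi/4^{1/3} = 2^{1/3}\pi$ and $1/(-i) = i$, producing the factor $\pi i/\tau$ and the claimed formula.

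The only delicate point is the bookkeeping of branches: the square root $\sqrt{-i\tau}$ in \eqref{eq:Dedekind}, the sixth root $(\lambda(1-\lambda))^{1/6}$, and the twelfth root $\Delta(E)^{-1/12}$ must all be taken on compatible branches so that no spurious root of unity is introduced when passing from $\tau$ near $\infty$ to $\tau$ near $0$. I expect this to be the main, and essentially the only, obstacle. It can be handled by the same continuity argument used earlier to fix the constants $A$ and $B$: both sides are holomorphic and nonvanishing on a simply connected neighborhood, the branches chosen near $\infty$ continue analytically into the region near $0$, and the absence of an extra constant can be pinned down by comparing the leading behavior along the imaginary axis.
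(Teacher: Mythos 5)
Your proposal is correct and follows essentially the same route as the paper: apply $\tau \mapsto -1/\tau$, use $\lambda(-1/\tau) = 1-\lambda(\tau)$ and $\eta(-1/\tau)^2 = -i\tau\,\eta(\tau)^2$, and simplify the constant via $2\pi/4^{1/3} = 2^{1/3}\pi$. The only cosmetic difference is that you transform the eta-quotient identity preceding Theorem~\ref{thm:period} and then convert to a period expression, whereas the paper transforms the period expression of Theorem~\ref{thm:period} directly; your remark about tracking branches is a reasonable caveat that the paper leaves implicit.
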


\begin{proof}
This is proven by applying the transformation $\tau \to -1/\tau$ in Theorem \ref{thm:period} and using \eqref{eq:Galois} and \eqref{eq:Dedekind}.
\end{proof}

\begin{theorem}
For $\tau$ around 1, we have
$$\omega_1 = 2^{1/3} \frac{\pi i}{(\tau + 1) \sqrt{1 - \lambda}} (\lambda (1 - \lambda))^{1/6} \Delta(E)^{-1/12} {}_{2}F_{1}\bigg(\frac{1}{2}, \frac{1}{2}; 1; \frac{1}{1 - \lambda}\bigg).$$
\end{theorem}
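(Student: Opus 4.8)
The plan is to mirror the proof of Theorem \ref{thm:transform}, but now using the composite substitution that realizes the remaining element of $S_3 \cong \mathrm{GL}_2(\mathbb{F}_2)$. Concretely, I would apply $\tau \mapsto \sigma := -1/(\tau+1)$ (first $T : \tau \mapsto \tau+1$, then $S : \tau \mapsto -1/\tau$) to the period expression of Theorem \ref{thm:period}, which is valid for $\sigma$ around $\infty$. By \eqref{eq:Galois}, $\lambda(\sigma) = 1 - \lambda(\tau+1) = 1 - \lambda/(\lambda-1) = 1/(1-\lambda)$, so the hypergeometric argument becomes $1/(1-\lambda)$; since $\sigma$ is around $\infty$ exactly when $\tau$ is around $-1$, and $\lambda(\tau+2) = \lambda(\tau)$, this is precisely the regime $\lambda \to \infty$ where $\,{}_{2}F_{1}(1/2,1/2;1;1/(1-\lambda))$ converges.

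First I would record the two transformation laws. From \eqref{eq:Galois} we get $\lambda(\sigma)(1-\lambda(\sigma)) = -\lambda/(1-\lambda)^2$, and from \eqref{eq:Dedekind} we have $\eta(\tau+1)^2 = e^{\pi i/6}\eta(\tau)^2$ together with $\eta(-1/w)^2 = -iw\,\eta(w)^2$, which compose to $\eta(\sigma)^2 = -i(\tau+1)e^{\pi i/6}\eta(\tau)^2 = e^{-\pi i/3}(\tau+1)\eta(\tau)^2$. Substituting these into the $\eta$-identity ${}_{2}F_{1}(1/2,1/2;1;\lambda) = 4^{1/3}\eta(\tau)^2/(\lambda(1-\lambda))^{1/6}$ evaluated at $\sigma$, and using the branch $(-1)^{1/6} = e^{\pi i/6}$ for the sixth root of $-\lambda/(1-\lambda)^2$, I would obtain ${}_{2}F_{1}(1/2,1/2;1;1/(1-\lambda)) = -i\,2^{2/3}(\tau+1)\eta(\tau)^2\lambda^{-1/6}(1-\lambda)^{1/3}$. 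Finally, inserting $\omega_1 = 2\pi\eta(\tau)^2\Delta(E)^{-1/12}$ (equivalently, Theorem \ref{thm:period} together with $\Delta(\tau) = (2\pi)^{12}\eta(\tau)^{24} = \omega_1^{12}\Delta(E)$), and rewriting $\lambda^{1/6}(1-\lambda)^{-1/3} = (\lambda(1-\lambda))^{1/6}/\sqrt{1-\lambda}$ together with $2\pi/2^{2/3} = 2^{1/3}\pi$, yields the claimed formula.

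I expect the main obstacle to be the consistent bookkeeping of the roots of unity. The factor $e^{\pi i/6}$ produced by $\eta(\tau+1)^2$, the $-i$ from $\eta(-1/w)^2$, and the branch $e^{\pi i/6}$ of $(-\lambda)^{1/6}$ must conspire so that $e^{-\pi i/3}/e^{\pi i/6} = -i$ and then $1/(-i) = i$, leaving exactly the single factor $i$ (and a real, positive $\sqrt{1-\lambda}$) in the final expression; an inconsistent choice of branch at any step corrupts this phase. A secondary subtlety is the identification of the neighborhood: $\sigma = -1/(\tau+1)$ carries $\tau = -1$, not $\tau = 1$, to $\infty$, so the identity is first established for $\tau$ around $-1$ and then read off at the cusp $\lambda = \infty$ labelled $\tau = 1$ via the $2$-periodicity $\lambda(\tau+2) = \lambda(\tau)$; this is exactly what produces the $(\tau+1)$, rather than $(\tau-1)$, in the denominator.
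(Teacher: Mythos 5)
Your proposal is correct and follows essentially the same route as the paper: the paper obtains this theorem by applying $\tau \to \tau + 1$ to Theorem \ref{thm:transform} (which was itself obtained from Theorem \ref{thm:period} via $\tau \to -1/\tau$), so your single composite substitution $\tau \mapsto -1/(\tau+1)$ into Theorem \ref{thm:period}, using \eqref{eq:Galois} and \eqref{eq:Dedekind}, is the same computation carried out in one step, and your phase bookkeeping ($e^{-\pi i/3} \cdot e^{-\pi i/6} = -i$, then $1/(-i) = i$) reproduces the stated factor of $i/((\tau+1)\sqrt{1-\lambda})$ exactly. The only difference is that you make explicit the branch choices and the cusp identification ($\tau$ near $-1$ versus $1$) that the paper leaves implicit.
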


\begin{proof}
This is proven by applying the transformation $\tau \to \tau + 1$ in Theorem \ref{thm:transform} and using \eqref{eq:Galois} and \eqref{eq:Dedekind}.
\end{proof}

\section{Complex multiplication and quasi-period relations}

Suppose $\tau$ satisfies $a\tau^2 + b\tau + c = 0$ for mutually coprime integers $a$, $b$, $c$ such that $a > 0$ and $-d = b^2 - 4ac$, that is,
\begin{equation}
\label{Chudnovsky}
\tau = \frac{-b + \sqrt{-d}}{2a}.
\end{equation}

Let $q = e^{2\pi i \tau}$ with ${\rm Im}(\tau) > 0$, and define
$$s_2(\tau) = \frac{E_4(\tau)}{E_6(\tau)} \bigg(E_2(\tau) - \frac{3}{\pi {\rm Im}(\tau)}\bigg),$$
where
\begin{align*}
E_2(\tau) &= 1 - 24 \sum_{n = 1}^\infty n \frac{q^n}{1 - q^n}, \\
E_4(\tau) &= 1 + 240 \sum_{n = 1}^\infty n^3 \frac{q^n}{1 - q^n}, \\
E_6(\tau) &= 1 - 504 \sum_{n = 1}^\infty n^5 \frac{q^n}{1 - q^n}.
\end{align*}
Then we have from \cite{Chen} that
\begin{equation}
\label{eq:Chudnovsky}
\Omega_1 {\rm H}_1 {\rm Im}(\tau) - \Omega_1^2 \left[{\rm Im}(\tau) \frac{3g_3}{2g_2} s_2(\tau)\right] = \pi
\end{equation}
by taking $E = E_\lambda$ so that $\omega_i = \Omega_i$ and $\eta_i = {\rm H}_i$.

\section{A Chudnovsky-Ramanujan type formula}

Using the differential relations derived by Bruns \cite[pp. 237-238]{Bruns} with
\begin{align*}
g_2 &= \frac{4}{3} (\lambda^2 - \lambda + 1), \\
g_3 &= \frac{4}{27} (\lambda + 1)(2\lambda - 1)(\lambda - 2),
\end{align*}
we obtain
\begin{align*}
\frac{d\Omega_1}{d\lambda} &= -\frac{{\rm H}_1}{2\lambda (\lambda - 1)} - \frac{2\lambda - 1}{6\lambda (\lambda - 1)}\Omega_1, \\
\frac{d{\rm H}_1}{d\lambda} &= \frac{\lambda^2 - \lambda + 1}{18\lambda (\lambda - 1)}\Omega_1 + \frac{2\lambda - 1}{6\lambda (\lambda - 1)}{\rm H}_1.
\end{align*}

\begin{remark}
These relations can also be obtained by using Fricke's and Klein's method of deriving the Picard-Fuchs differential \cite[pp. 33-34]{Modulfunctionen} if applied to $E_\lambda$.
\end{remark}

Using the first differential relation above, together with a period expression, yields a Chudnovsky-Ramanujan type formula.

\begin{theorem}
\label{thm:general}
If $\tau$ satisfies \eqref{Chudnovsky} and is around $\infty$, and
\begin{align*}
g_2 &= \frac{4}{3} (\lambda^2 - \lambda + 1), \\
g_3 &= \frac{4}{27} (\lambda + 1)(2\lambda - 1)(\lambda - 2),
\end{align*}
then we have that
$$-F^2 \bigg[\frac{2\lambda - 1}{3} + \frac{3g_3}{2g_2} s_2(\tau)\bigg] + \lambda (1 - \lambda) \frac{dF^2}{d\lambda} = \frac{2a}{\pi \sqrt{d}},$$
where $\lambda = \lambda(\tau)$ and $F = {}_{2}F_{1}(1/2, 1/2; 1; \lambda)$.
\end{theorem}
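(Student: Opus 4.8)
The plan is to assemble three facts that are already available: the Chudnovsky quasi-period relation \eqref{eq:Chudnovsky}, the first of the two Bruns differential relations, and the classical period expression \eqref{eq:classical}. The strategy is to eliminate the quasi-period ${\rm H}_1$ between the first two relations, and then to substitute $\Omega_1 = \pi F$ so as to convert everything into the hypergeometric function $F$. First I would record that the complex multiplication hypothesis \eqref{Chudnovsky} yields ${\rm Im}(\tau) = \sqrt{d}/(2a)$, since $\tau = (-b + i\sqrt{d})/(2a)$ with $a > 0$. Taking $E = E_\lambda$, so that $\omega_i = \Omega_i$ and $\eta_i = {\rm H}_i$, I would divide \eqref{eq:Chudnovsky} through by ${\rm Im}(\tau)$ to obtain
$$\Omega_1 {\rm H}_1 - \Omega_1^2 \frac{3g_3}{2g_2} s_2(\tau) = \frac{\pi}{{\rm Im}(\tau)} = \frac{2\pi a}{\sqrt{d}}.$$

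Next I would use the first Bruns relation to express $\Omega_1 {\rm H}_1$ without reference to ${\rm H}_1$. Solving
$$\frac{d\Omega_1}{d\lambda} = -\frac{{\rm H}_1}{2\lambda(\lambda - 1)} - \frac{2\lambda - 1}{6\lambda(\lambda - 1)}\Omega_1$$
for ${\rm H}_1$ gives ${\rm H}_1 = 2\lambda(1 - \lambda)\,d\Omega_1/d\lambda - \tfrac{1}{3}(2\lambda - 1)\Omega_1$. Multiplying by $\Omega_1$ and using the key observation $\Omega_1\,d\Omega_1/d\lambda = \tfrac{1}{2}\,d\Omega_1^2/d\lambda$, which is precisely what allows the final formula to be phrased in terms of $F^2$ rather than $F\,dF/d\lambda$, I obtain
$$\Omega_1 {\rm H}_1 = \lambda(1 - \lambda)\frac{d\Omega_1^2}{d\lambda} - \frac{2\lambda - 1}{3}\Omega_1^2.$$
Substituting this into the previous display and collecting the $\Omega_1^2$ terms produces
$$-\Omega_1^2\bigg[\frac{2\lambda - 1}{3} + \frac{3g_3}{2g_2}s_2(\tau)\bigg] + \lambda(1 - \lambda)\frac{d\Omega_1^2}{d\lambda} = \frac{2\pi a}{\sqrt{d}}.$$

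Finally, the classical period expression \eqref{eq:classical} gives $\Omega_1 = \tfrac{1}{2}P_1 = \pi F$, whence $\Omega_1^2 = \pi^2 F^2$ and $d\Omega_1^2/d\lambda = \pi^2\,dF^2/d\lambda$; dividing the identity above by $\pi^2$ yields exactly the claimed formula. Since every substantive input has been established earlier, the computation is essentially routine assembly, and I expect no genuine obstacle. The only points requiring care are the sign bookkeeping in the factor $\lambda(\lambda - 1)$ versus $\lambda(1 - \lambda)$ when inverting the Bruns relation, and the validity of $\Omega_1 = \pi F$, which holds precisely because $E = E_\lambda$ and $\tau$ is around $\infty$ so that the period expression around $\lambda = 0$ applies.
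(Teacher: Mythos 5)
Your proposal is correct and follows the same route as the paper's proof: eliminate ${\rm H}_1$ from \eqref{eq:Chudnovsky} via the first Bruns relation, use $2\Omega_1\,d\Omega_1/d\lambda = d\Omega_1^2/d\lambda$, and substitute $\Omega_1 = \pi F$ from \eqref{eq:classical}. The sign bookkeeping ($-2\lambda(\lambda-1) = 2\lambda(1-\lambda)$) and the value ${\rm Im}(\tau) = \sqrt{d}/(2a)$ are handled exactly as in the paper.
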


\begin{proof}
The first differential relation is equivalent to
\begin{equation}
\label{eq:Bruns}
{\rm H}_1 = -2\lambda (\lambda - 1) \frac{d\Omega_1}{d\lambda} - \frac{2\lambda - 1}{3}\Omega_1.
\end{equation}
Substitute \eqref{eq:Bruns} into \eqref{eq:Chudnovsky} to get
\begin{align*}
\frac{2\pi a}{\sqrt{d}} &= -\Omega_1 \bigg[2\lambda (\lambda - 1) \frac{d\Omega_1}{d\lambda} + \frac{2\lambda - 1}{3}\Omega_1 + \Omega_1 \frac{3g_3}{2g_2} s_2(\tau)\bigg] \\
&= -\Omega_1^2 \bigg[\frac{2\lambda - 1}{3} + \frac{3g_3}{2g_2} s_2(\tau)\bigg] + 2\lambda (1 - \lambda) \Omega_1 \frac{d\Omega_1}{d\lambda}.
\end{align*}
Then, substituting $\Omega_1 = \pi F$ from \eqref{eq:classical}, where $F = {}_{2}F_{1}(1/2, 1/2; 1; \lambda)$, into the above, we obtain
$$\frac{2a}{\pi \sqrt{d}} = -F^2 \bigg[\frac{2\lambda - 1}{3} + \frac{3g_3}{2g_2} s_2(\tau)\bigg] + \lambda (1 - \lambda) \frac{dF^2}{d\lambda}.$$
\end{proof}

Under the covering $X(2) \to X(1)$ given by
$$J = \frac{4}{27} \frac{(\lambda^2 - \lambda + 1)^3}{\lambda^2 (1 - \lambda)^2},$$
the only values for $\tau$ for which $\lambda(\tau) \in \mathbb{Q}$ correspond to $J = 1$ and $\lambda = 2, 1/2, -1$, each with ramification index 2. In the fundamental domain for $\Gamma(2)$, we have
\begin{align*}
\lambda\bigg(\frac{i \pm 1}{2}\bigg) &= 2, \\
\lambda(i) &= \frac{1}{2}, \\
\lambda(i + 1) &= -1.
\end{align*}
Using
$$\frac{3g_3}{2g_2} = \Omega_1^2 \frac{\pi^2}{3} \frac{E_6(\tau)}{E_4(\tau)}$$
in Theorem \ref{thm:general} yields
$$\frac{2a}{\pi \sqrt{d}} = -F^2 \bigg[\frac{2\lambda - 1}{3} + \frac{\pi^4}{3} \bigg\{E_2(\tau) - \frac{3}{\pi {\rm Im}(\tau)}\bigg\} F^2\bigg] + \lambda (1 - \lambda) \frac{dF^2}{d\lambda},$$
which can be evaluated at $\tau = i$. Recall that $E_2(\tau)$ is a quasi-modular form, and $E_2(-1/\tau) = \tau^2 E_2(\tau) - 6i \tau/\pi$ for $\tau$ in the upper half-plane. Setting $\tau = i$, shows that $E_2(i) = 3/\pi$, and we obtain \eqref{eq:identity1}. Furthermore, it is evident that $E_2(\tau + 1) = E_2(\tau)$ because $q(\tau + 1) = q(\tau)$. Thus, setting $\tau = i + 1$, we obtain \eqref{eq:identity2}.

\end{document}